\theoremstyle{plain}
\newtheorem{theorem}{Theorem}
\newtheorem{corollary}[theorem]{Corollary}
\newtheorem{lemma}[theorem]{Lemma}
\theoremstyle{definition}
\newtheorem{definition}[theorem]{Definition}
\newcommand{\suchthat}{:} %{\mid}
\newcommand{\group}[1]{{\mathrm{#1}}}
\newcommand{\Moeb}{\group{M\ddot{o}b}}
\newcommand{\R}{{\mathbb{R}}}
\newcommand{\N}{{\mathbb{N}}}
\newcommand{\Mak}{{\mathcal{M}}}
\newcommand{\Hyp}{{\mathcal{H}}}
\begin{document}

\title[Rigidity of causal maps]{Rigidity of causal maps}

\author[N.~B.~Andersen]{Nils Byrial Andersen}
\address{\mbox{Department of Mathematics}\\ \mbox{Aarhus University}\\\newline
\mbox{Ny Munkegade 118}\\ \mbox{DK-8000 Aarhus C}\\ Denmark}
\email{byrial@imf.au.dk}

\thanks{The authors were supported by a research grant from the Australian Research Council (ARC)}

\author[M.~G.~Cowling]{Michael G.~Cowling}
\address{\mbox{School of Mathematics and Statistics}\\ \mbox{University of New South Wales}\\\newline
\mbox{Sydney NSW 2052}\\ Australia }
\email{m.cowling@unsw.edu.au}

\date{17/07/2013}
%\keywords{}

\begin{abstract}
We show that order-invariant injective maps on the noncompactly causal symmetric space
$\group{SO} _0 (1,n)/\group{SO}_0 (1,n-1)$ belong to $\group{O}(1,n)^+$.
\end{abstract}

\maketitle

\section{Introduction}

Let $G$ be $\group{SO} _0 (1,n)$, let $H$ be $\group{SO}_0 (1,n-1)$, 
which we identify with a subgroup of $G$ in the usual way, and let $\Mak$ be the quotient space $G/H$.
Then $\Mak$ is a noncompactly  causal semisimple symmetric space, 
that is, there exists a $G$-invariant global partial order determined infinitesimally by
an $H$-invariant cone $C_{eH}$ in the tangent space $T _{eH} G/H$ at the origin $eH$.
More precisely, $C_{eH}$ is the Lorentz cone, and the order is defined by $yH\ge xH$ if and only if
$yx^{-1}\in \exp(C_{eH})$, where $x,y\in G$. If we identify $\Mak$ with $\Hyp _n$,
the hyperboloid with one sheet in $\R ^{n+1}$:
\begin{equation*}
\Hyp _n:= \{ x = (x_0,x_1\dots,x_{n}) \in \R^{n+1} \suchthat
-x_0 ^2 +x_1^2 +\dots +x_n^2 =1\},
\end{equation*}
then the partial order is given by
\begin{equation*}
y\ge x \iff  (y_0 \ge x_0 \quad {\text{and}} \quad  -x_0y_0 +
x_1y_1\dots +x_n y_n\ge 1).
\end{equation*}
The space $\Mak$ may also be realised as a Makarevi\v c space,
that is, as an open symmetric orbit in the conformal
compactification $V^c$ of a semisimple Euclidean Jordan algebra
$V$. Then $\exp V$, which will be written $\Omega$, is a  symmetric convex cone that defines the
(flat) invariant causal structure on $\Mak$.

In this paper we characterise the order-invariant (or, cone-preserving)
injective maps $f$ on $\Mak$, without making any continuity or
differentiability assumptions, nor assuming that $f$ is surjective.
Our strategy is to study the
behaviour of $f$ at the future boundary of $\Mak$ (as $x_0 \to
\infty$), which in the Makarevi\v c picture can be identified with
$\R^{n-1}\cup \{\infty\}$. We define an (unique) extension of $f$
at $\infty$, and show that this is a M\"obius map, which in turn
implies that $f$ is causal, and hence birational and smooth.

Geometrically, noncompactly causal symmetric spaces may be viewed
as generalisations of the space-times arising in general
relativity. In this physical set-up, Alexandrov \cite{A1}, Zeeman \cite{Z}
and others have showed that bijective maps on Minkowski space
preserving causality (or preserving timelike or null cones) are
given by Lorentz transformations, translations and dilations.
There are generalisations to other space-times, and Alexandrov
\cite{A2} and Lester \cite{L} have also considered conformal
spaces.

\section{The imaginary Lobachevski space $\Mak \simeq \group{SO} _0 (1,n)/\group{SO}_0 (1,n-1)$}

In the following two sections, we summarise the notions and results that we need
about causal symmetric spaces and Makarevi\v c spaces.
For details, we refer to \cite{B2, B3, F, FHO, HO}.
In particular, we use the examples in \cite[\S8]{F} and \cite[\S10]{FHO}.

Let $V$  be the Euclidean space $\R ^{n}$, equipped with the usual Euclidean scalar product, 
and write $z\in \R ^{n}$ in coordinates as $(z_0,z_1,\dots, z_{n-1})$.
Define the product $z = x y$ by
\begin{equation*}
z_0 = x_0 y_0 +\dots + x_{n-1} y _{n-1}
\quad \text{and}\quad
z_j = x_0 y_j + x_j y_0
\quad\text{if $ j=1,\dots, n-1$}.
\end{equation*}
With this additional structure, $V$ is the Euclidean Jordan algebra $\R^{1,n-1}$, 
and its neutral element $e$ is $(1,0,\dots,0)$.
Let $\Delta$ denote the quadratic Lorentz form:
\begin{equation*}
\Delta (x) = x_0 ^2 - x_1 ^2 -\dots - x_{n-1} ^2.
\end{equation*}
The associated open symmetric cone $\Omega$ is the Lorentz cone:
\begin{equation*}
\Omega = \{x^2\}^o= \{x\in V \suchthat \Delta (x) >0,\,   x_0 >0 \}.
\end{equation*}

Let $j\colon x \mapsto x^{-1}$ be the inversion map on $V$, let $N_+\simeq V$ 
be the translation group $\{\tau_v \suchthat v\in V\}$, where  $\tau _v( x):= x+v$, 
and let $\group{Str}(V)$ denote the structure group:
\[
\group{Str}(V) :=  \{g\in \group{GL} (V) \suchthat j \circ g\circ j\in \group{GL} (V)\} .
\]
Note that $\group{Str}(V)= G(\Omega)\times\{\pm I\}$, where
\[
G(\Omega)=  \{ g\in \group{GL}(V)\suchthat g(\Omega) \subseteq \Omega \}= \R _{+} \times \group{O}(1,n-1)^+.
\]
The conformal group $\group{Co}(V)$ of $V$ is the group of rational maps generated by
$N_+$, $\group{Str}(V)$ and the inversion map $j$, 
and the causal group $\group{Co}(G(\Omega))$ is the subgroup generated by $N_+$, $G(\Omega)$ and $j$.

Define $N_- := j\circ N_+ \circ j$, then
$\group{Str}(V) \ltimes N_-$ is a parabolic subgroup of $\group{Co}(V)$. The
conformal compactification $V^c$ of $V$ is defined by $V^c
:=  \group{Co}(V)/\group{Str}(V) \ltimes N_-$. We equip $V^c$ with the flat $\group{Co}
(G(\Omega))$-invariant causal structure defined by
$-\overline{\Omega} \subseteq T _x V^c = V$.

Define  the involution $\alpha$ on $V$ by
\begin{equation*}
\alpha (x_0, x_1, \dots, x_{n-1}) = (x_0, -x_1, \dots,- x_{n-1});
\end{equation*}
the eigenspaces of $\alpha$ are $V_+$ and $V_-$, where $V_+= \{ (x_0, 0, \dots, 0)\suchthat x_0 \in \R\}$ and
\[
V_- = \{
(0, x_1, \dots, x_{n-1})\suchthat (x_1, \dots x_{n-1}) \in
\R^{n-1}\}\cong \R^{n-1}.
\]
Further, let
\[
\Omega _+ =\Omega \cap V_+= \{(x_0, 0, \dots, 0)\suchthat x_0 \in \R_+\},
\]
and define $x_+ := (x +\alpha (x)) /2$ and $x_- := (x -\alpha (x)) /2$, for all $x\in V$.

Let $G$ be the group
\begin{equation*}
G:=  \group{Co}(V) _0 ^{(-\alpha )}= \{g\in \group{Co}(V) \suchthat (-\alpha )\circ g\circ (-\alpha) = g \} _0
\end{equation*}
(the subscript $0$ indicates the identity component).
Then $G$ is generated by translations by elements of $V_-$, dilations $x \mapsto rx$, 
where $r >0$, the orthogonal maps in $\group{O}(n-1)$ and $j$.
The orbit $\Mak = G e$ is open in $V^c$, and $\Mak \simeq G/H$, where
$H := \{ g\in G \suchthat g  e = e\}$. The involution $\sigma\colon g \mapsto j\circ g\circ j$
makes $\Mak = G/H$ into a symmetric space of Makarevi\v c type, with flat causal structure
inherited from $V^c$.
Further, $G \simeq \group{SO} _0 (1,n)$ and $H \simeq \group{SO} _0 (1,n-1)$.

Let $\Hyp _n$ be the one-sheeted hyperboloid 
$\{y\in \R^{n+1} \suchthat -y_0 ^2 +y_1 ^2 +\dots + y_{n} ^2 =1\}$ in $\R^{n+1}$.
The connected component of $e$ of the subset $\Mak\cap V$ is given by
\begin{equation*}
(\Mak \cap V ) _e = \{ x\in V \suchthat x +\alpha (x) \in \Omega\}_e = V_- +\Omega _+,
\end{equation*}
and the map from $\Mak$ to  $\Hyp _n$  determined by
\begin{equation}\label{1}
y_0 = \frac {1-\Delta (x)}{2x_0},\quad  y_1 = \frac{x_1}{x_0}, \quad \dots,\quad
y_{n-1} = \frac{x_{n-1}}{x_0}, \quad y_n = \frac {1+\Delta (x)}{2x_0}
\end{equation}
for all $x\in (\Mak \cap V)_e$ is a causal isomorphism, with inverse given by
\begin{equation}\label{2}
x_0 = \frac 1{y_0 +y_{n}},\quad x_1 = \frac {y_1}{y_0 +y_{n}},\quad\dots,\quad
 x_{n-1} = \frac {y_{n-1}}{y_0 +y_{n}}.
\end{equation}

\section{The future}

A continuous, piecewise differentiable curve $\gamma\colon  [a,b] \to \Mak$ is
causal, if the (right) derivative $\gamma '(t)$
belongs to $\gamma(t)-{\overline{\Omega}}$ for all $t \in ( a,b )$.
We define the partial order $\le$ on $\Mak$ by $x\le y$ if and only if there
exists a causal curve from $x$ to $y$.
For each $z\in \Mak$, we say that $x$ belongs to the future of $z$ if
$x\ge z$, and we write $\Mak _z^+ := \{ x\in \Mak \suchthat x\ge z\}$. We
now show that with our choice of causal structure, the future of
$z\in (\Mak \cap V ) _e$ does not contain \lq points at $\infty$\rq.

\begin{lemma}
Let $z\in (\Mak \cap V ) _e$. The \lq future of $z$\rq\ is given by
\begin{equation*}
\Mak _z^+ = \{ x\in \Mak \suchthat x\ge z\} = (z-{\overline{\Omega}}) \cap
(x_+ \in \Omega _+).
\end{equation*}
This is a bounded set.
\end{lemma}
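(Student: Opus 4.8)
The plan is to work entirely inside the flat chart $(\Mak\cap V)_e=V_-+\Omega_+=\{x\in V\suchthat x_0>0\}$, where the causal structure is the constant cone $-\overline{\Omega}$, so that a causal curve $\gamma$ lying in this chart satisfies $\gamma'(t)\in-\overline{\Omega}$. Note first that the condition $x_+\in\Omega_+$ is just $x_0>0$, i.e.\ membership in this chart. I would dispose of the inclusion $(z-\overline{\Omega})\cap\{x_+\in\Omega_+\}\subseteq\Mak_z^+$ by joining $z$ to such a point $x$ with the straight segment $\gamma(t)=z+t(x-z)$, $t\in[0,1]$: its velocity is the constant vector $x-z\in-\overline{\Omega}$, so $\gamma$ is causal, and since $t\mapsto\gamma(t)_0$ is the affine interpolation of the positive numbers $z_0$ and $x_0$ it stays positive, keeping $\gamma$ in the chart. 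Hence $x\ge z$.

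The reverse inclusion carries the content. Let $\gamma\colon[a,b]\to\Mak$ be a causal curve from $z$ to $x$, and set $t^\ast:=\sup\{s\suchthat\gamma([a,s])\subseteq(\Mak\cap V)_e\}$. On $[a,t^\ast)$ the curve lies in the flat chart, so
\[
\gamma(s)-z=\int_a^s\gamma'(t)\,dt\in-\overline{\Omega},
\]
because the integral of a function valued in the closed convex cone $-\overline{\Omega}$ again lies in $-\overline{\Omega}$. Thus $z-\gamma(s)\in\overline{\Omega}$, which yields simultaneously $\Delta(z-\gamma(s))\ge0$ and $0<\gamma(s)_0\le z_0$. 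Since $z-\gamma(s)$ lies in $\overline{\Omega}$ with time component $z_0-\gamma(s)_0\in[0,z_0]$, its remaining components are bounded by $z_0$ as well, so $\gamma(s)$ is confined, for all $s<t^\ast$, to the set $\overline{K}:=(z-\overline{\Omega})\cap\{x\suchthat 0\le x_0\le z_0\}$, which is bounded because $\overline{\Omega}$ is a proper cone.

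The crucial step is that $\gamma$ cannot leave the chart. Since $\overline{K}$ is a compact subset of $V$ on which the topologies of $V$ and $V^c$ agree, the limit $\gamma(t^\ast)=\lim_{s\to t^\ast}\gamma(s)$ is a genuine point of $V\cap\overline{K}$. The only points of $\overline{K}$ outside $(\Mak\cap V)_e$ form the slice $\{x_0=0\}$, and none of these lies in $\Mak$, because $\Mak\cap V=\{x_0>0\}$. As $\gamma(t^\ast)\in\Mak$, we conclude $\gamma(t^\ast)_0>0$, so $\gamma(t^\ast)$ lies in the open chart; by openness this forces $t^\ast=b$. Therefore the whole curve stays in the chart, $x=\gamma(b)\in\overline{K}$ has $x_0>0$, and $x\in(z-\overline{\Omega})\cap\{x_+\in\Omega_+\}$. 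Equality of the two sets follows, and boundedness is immediate from $\Mak_z^+\subseteq\overline{K}$.

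I expect the trapping step to be the main obstacle: one must exclude the possibility that a causal curve escapes to a ``point at infinity'' of $V^c$ rather than remaining in $V$. Here the monotonicity of the coordinate $x_0$ (forced by $\gamma'(t)\in-\overline{\Omega}$) together with the inequality $\Delta(z-\gamma(s))\ge0$ does double duty, at once bounding the curve inside $V$ and preventing it from reaching the forbidden slice $\{x_0=0\}$ --- the future boundary, identified with $\R^{n-1}\cup\{\infty\}$ --- which is disjoint from $\Mak$. Recognising this disjointness is precisely what makes the confinement argument close, and it is exactly the assertion of the lemma that the future contains no points at infinity.
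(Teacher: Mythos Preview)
Your argument is correct and follows essentially the same route as the paper: the straight-line segment for the easy inclusion, and for the converse the observation that on $[a,t^\ast)$ the curve is trapped in the bounded set $(z-\overline{\Omega})\cap\{x_0\ge 0\}$, so it cannot reach $V^c\setminus V$ and hence never leaves the chart. One small correction: you write $\Mak\cap V=\{x_0>0\}$, but in fact $\Mak\cap V=\{x_0\neq 0\}$ (apply $j$ to a point with $\Delta<0$ to land in $\{x_0<0\}$); only the identity component $(\Mak\cap V)_e$ equals $\{x_0>0\}$. This does not affect your proof, since the fact you actually use---that $\{x_0=0\}$ is disjoint from $\Mak$---remains true.
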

\begin{proof}
Let $x \in (z-{\overline{\Omega}}) \cap (x_+ \in \Omega _+)$, and
define $\gamma \colon  [0,1] \to V$ by $\gamma (t) = tx
+(1-t) z$. Then $(\gamma (t)) _+ = (\gamma (t)  +\alpha (\gamma
(t)))/2 \in \Omega$ by convexity, and $\gamma' (t) = x-z \in -
\overline{\Omega}$. So $\gamma$ is a nontrivial causal curve in
$\Mak$ from $z$ to $x$, and $x$ belongs to the future of $z$.

Conversely, let $\gamma \colon   [a,b] \to \Mak$ be a causal curve, with
$\gamma (a) = z$, that contains \lq points at $\infty$\rq, and define
\begin{equation*}
\kappa := \inf \{ t\in [a,b] \suchthat \gamma (t) \not\in \Mak \cap V\}.
\end{equation*}
The curve $\gamma \colon  [ a,\kappa) \to V$ is causal
(with causal structure given by $-\overline{\Omega}$) and
$\gamma  \subseteq z - \overline{\Omega}$.
Now $\gamma $ is
contained in $\{ x\in V\suchthat x +\alpha (x) \in \Omega \}$, and so
$\gamma \subseteq (z-{\overline{\Omega}}) \cap (x_+ \in
\Omega _+) $, which is a bounded set, and we have a contradiction.
\end{proof}

The case in which $z=e$ was proved in \cite[Proposition 5.2]{BS}. 
We define the \lq future boundary\rq\ of $\Mak _z^+$ in $V^c$  by $\partial _\infty
\Mak _z^+ := \overline{\Mak _z^+}\setminus\Mak _z^+$.
\begin{corollary}
Let $z\in (\Mak \cap V ) _e$. The \emph{future boundary} of $\Mak _z^+$ is given by
\begin{equation*}
\partial _\infty \Mak _z^+  = \overline{\Mak _z^+}\setminus\Mak _z^+= (z-{\overline{\Omega}})
\cap (x_+\in \overline{\Omega} _+\setminus\Omega _+).
\end{equation*}
\end{corollary}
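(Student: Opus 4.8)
The plan is to deduce the corollary directly from the Lemma by computing the closure of $\Mak_z^+$ and removing $\Mak_z^+$ itself. First I would record two structural facts. By the Lemma, $\Mak_z^+$ is a bounded subset of $V$; since $V$ embeds as an open subset of $V^c$ and a bounded subset of $V$ has compact (hence $V^c$-closed) closure inside $V$, the closure of $\Mak_z^+$ taken in $V^c$ coincides with its closure taken in $V$. In particular the future boundary contains no genuine \lq points at $\infty$\rq, consistent with the Lemma, and the whole computation may be carried out inside the Jordan algebra $V$. Second, I would write $\Mak_z^+ = A \cap B$, where $A := z-\overline{\Omega}$ is a closed convex set (a translate of the closed Lorentz cone) and $B := (x_+ \in \Omega_+)$ is, since $x_+=(x_0,0,\dots,0)$ and $\Omega_+=\{(t,0,\dots,0)\suchthat t>0\}$, simply the open half-space $\{x\in V \suchthat x_0>0\}$, whose closure $\overline{B}$ equals $(x_+\in\overline{\Omega}_+)=\{x\in V\suchthat x_0\ge 0\}$.

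The heart of the argument is the identity $\overline{A\cap B}=A\cap\overline{B}$. The inclusion $\overline{A\cap B}\subseteq\overline{A}\cap\overline{B}=A\cap\overline{B}$ is automatic ($A$ being closed), so the point is the reverse inclusion, and this is where convexity does the work. I would first note that $A\cap B\neq\emptyset$: indeed $z\in A$ (as $0\in\overline{\Omega}$) and $z\in B$ (as $z\in(\Mak\cap V)_e=V_-+\Omega_+$ forces $z_0>0$), so $z\in A\cap B$. Now take any $p\in A\cap\overline{B}$ and join it to $q:=z$ by the segment $t\mapsto (1-t)p+tq$, $t\in[0,1]$. Convexity of $A$ keeps the whole segment in $A$, while the first coordinate of $(1-t)p+tq$ equals $(1-t)p_0+tq_0\ge tq_0>0$ for $t\in(0,1]$, so the segment minus its endpoint $p$ lies in $B$; letting $t\to0^+$ exhibits $p$ as a limit of points of $A\cap B$, giving $p\in\overline{A\cap B}$. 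This yields $\overline{\Mak_z^+}=(z-\overline{\Omega})\cap(x_+\in\overline{\Omega}_+)$.

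Finally I would extract the boundary by elementary set algebra. Since $(x_+\in\Omega_+)\subseteq(x_+\in\overline{\Omega}_+)$, the two descriptions share the common factor $z-\overline{\Omega}$, and
\[
\partial_\infty\Mak_z^+=\overline{\Mak_z^+}\setminus\Mak_z^+
=(z-\overline{\Omega})\cap\bigl[(x_+\in\overline{\Omega}_+)\setminus(x_+\in\Omega_+)\bigr]
=(z-\overline{\Omega})\cap(x_+\in\overline{\Omega}_+\setminus\Omega_+),
\]
which is the asserted formula. The only step requiring genuine care is the closure-of-intersection identity: in general $\overline{A\cap B}$ may be strictly smaller than $A\cap\overline{B}$, and it is precisely the convexity of the translated cone $z-\overline{\Omega}$, together with the fact that $B$ is a half-space meeting $A$, that rescues the equality; everything else is bookkeeping.
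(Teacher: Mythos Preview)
Your proof is correct and follows essentially the same route as the paper's: both use the straight-line segment from $z$ to a candidate boundary point, together with convexity of $z-\overline{\Omega}$, to place that point in $\overline{\Mak_z^+}$. Your version is slightly more systematic (you first establish the closure identity $\overline{A\cap B}=A\cap\overline{B}$ and then subtract, and you are explicit about why the $V^c$- and $V$-closures coincide), but the geometric core is identical.
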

\begin{proof}
Suppose that $x\in (z-{\overline{\Omega}}) \cap (x\in \overline{\Omega}
_+\setminus\Omega _+)$; define the curve $\gamma \colon[0,1] \to V^c$
by $\gamma (t) = tx +(1-t) z$. Then $\gamma (t) \in\Mak _z^+ $
for all $t\in [0,1),$ whence $x\in\overline{\Mak _z^+}$, but $x \notin \Mak _z^+$.

Conversely, $\partial _\infty \Mak _z^+  \subseteq
(z-{\overline{\Omega}}) \cap (x\in \overline{\Omega _+})$.
\end{proof}

In coordinates, we may write
\begin{align*}
\Mak _z ^+ &=\{ x\in V_- +\Omega _+\suchthat \Delta (z-x) \ge 0,\,z_0 - x_0 \ge 0,\, x_0 >0\}\\
& = \{ (z_0-x_0, z_1 - x_1, \dots, z_{n-1}-x_{n-1}) \suchthat x_1 ^2
+\dots + x_{n-1} ^2\le x_0 ^2,\, 0< z_0-x_0 \le z_0\},
\end{align*}
for all $z\in (\Mak \cap V ) _e$, and the future boundary $\partial _\infty \Mak _z ^+$ of $\Mak _z ^+$ is
\begin{equation*}
\partial _\infty \Mak _z ^+ =
\{ (0, z_1- x_1, \dots, z_{n-1} - x_{n-1}) \suchthat x_1 ^2 +\dots +
x_{n-1} ^2\le z_0 ^2\},
\end{equation*}
the ball around $(z_1,\dots z_{n-1})$ with radius $z_0$.
Here and later in this paper, a \emph{ball} means a closed ball of finite but strictly positive radius.
We
note that any ball in $\R^{n-1}$ can be represented as
the future $\partial _\infty \Mak _z ^+ $ of an element $z\in (\Mak
\cap V) _e$.

\section{M\"{o}bius maps in ${\R}^{n-1}\cup \{\infty \}$}

We denote a point $x$ in ${\R}^{n-1}$ by $(x_{1},\ldots,x_{n-1})$, the 
Euclidean distance between $x$ and $y$ in ${\R}^{n-1}$
by $|x-y|$,  and the one-point compactification ${\R}^{n-1}\cup \{\infty \}$ by ${\R}^{n-1}_{\infty }$.

The \emph{hyperplane}  in ${\R}^{n-1}_{\infty }$ determined by distinct points
$a,b\in{\R}^{n-1}$ is the set of points $x\in{\R}^{n-1}_{\infty }$ (including $\infty$)
such that $|x-a|=|x-b|$;
the reflection in the hyperplane is the isometry of $\R^{n-1}$ that fixes the points in the 
hyperplane and exchanges $a$ and $b$; it is extended to ${\R}^{n-1}_{\infty }$ in the obvious way.

The \emph{hypersphere} (or Euclidean sphere) determined by a point $a$ in ${\R}^{n-1}$ 
and a positive number $r$ is the set of points $x\in{\R}^{n-1}$ satisfying $|x-a|=r$;  
the reflection (or inversion) in this sphere is the map $x \mapsto a + (r / |x-a|)^{2} (x-a)$, 
defined on ${\R}^{n-1}_{\infty }$ in the obvious way.
In particular, let $j_- (x) := |x|^{-2} x$ be the reflection in the unit hypersphere.

We define a \emph{sphere} to be either a hyperplane or a hypersphere.
A \emph{M\"{o}bius map} $\phi$ on
${\R}^{n-1}_{\infty }$ is a composition of a finite number
of reflections in spheres, and the group of
M\"{o}bius maps on ${\R}^{n-1}_{\infty }$ is
denoted by $\Moeb ({\R}^{n-1}_{\infty })$. We refer to
\cite{Be} for a treatment of the general theory of M\"{o}bius maps.

A map $\phi$  of $\R ^{n-1}$ is a similarity if there exists a positive number $r$ such that
\begin{equation*}
|\phi(y) - \phi (x)| = r |y-x|
\end{equation*}
for all $x,y \in \R^{n-1}$. Given such a map $\phi$, there exists an orthogonal matrix
$A$ such that $\phi (x) = r Ax +x_0$, where $x_0 \in \R^{n-1}$.
Similarities are M\"obius maps, and the M\"obius maps are generated by similarities and the reflection $j_-$.

Every M\"obius map $\phi$ has the property that either $\phi^{-1}(\R^{n-1}) = \R^{n-1}$,
in which case $\phi$ is a similarity,
or there is a point $x_0$ in $\R^{n-1}$ such that $\phi(x_0) = \infty$.
In the first case, $\phi$ maps spheres to spheres and the interiors of spheres 
(the bounded connected component of the complement of the sphere) to the interiors of spheres.
In the second case, $\phi$ maps spheres and their interiors to spheres and their interiors, 
as long as $x_0$ is not an element of the sphere or its interior.

M\"{o}bius maps have a unique continuation property, namely,
if $f$ is a map of a domain $D$ in ${\R}^{n-1}_{\infty }$
into ${\R}^{n-1}_{\infty }$ and for each point $a$ in
$D$ there is a ball $B$ containing $a$ such that
the restriction of $f$ to $B$ coincides with the restriction to $B$ of an element of $\Moeb ({\R}^{n-1}_{\infty })$, 
then $f$ is the restriction to $D$ of an element of $\Moeb ({\R}^{n-1}_{\infty })$.
%(as usual, the restriction of a map $g$ to a set $E$ is denoted by $g\restr E$).
This follows immediately from the fact that M\"{o}bius maps are completely determined 
by their restrictions to arbitrarily small nonempty open sets.

The next result \cite[Theorem 2.1]{H} generalises a theorem of Carath{\'e}odory
\cite{C},  which treats the case in which $n=2$.

\begin{theorem}\label{CC}
Let $D$ be a domain in $\R^{n-1}$, where $n\ge 3$.
Let $f\colon D\to\R^{n-1}$ be an injective map such that $f(H)$ is a
hypersphere whenever $H$ is a hypersphere contained in $D$ whose interior is contained in $D$.
Then $f$ is the restriction of a M{\"o}bius map.
\end{theorem}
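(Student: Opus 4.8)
The plan is to linearise the hypothesis and reduce to the fundamental theorem of projective geometry. Set $m = n-1 \ge 2$. Encode a hypersphere or hyperplane of $\R^m$, with equation $a\,|x|^2 + b\cdot x + c = 0$ (where $a,c \in \R$ and $b \in \R^m$), by the projective point $[a : b : c]$ in the projectivisation of $\R^{m+2} = \R^{n+1}$, equipped with the symmetric bilinear form of Lorentzian signature $(n,1)$ whose associated quadratic form is $|b|^2 - 4ac$. In this classical model the points of $\R^m_\infty$ are the null directions (the light cone), the incidence relation ``$x \in H$'' and the orthogonality of two hyperspheres both translate into the vanishing of this bilinear form, and the elements of $\Moeb(\R^m_\infty)$ act precisely as the projective transformations preserving the form. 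The goal is to show that $f$ induces such a projective transformation.

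First I would localise. By the unique continuation property recorded above, it is enough to exhibit, for each $a \in D$, a ball $B$ with $a \in B \subseteq D$ on which $f$ coincides with a M\"obius map; so I may assume from the outset that $D$ is an open ball $B$, and restrict attention to the hyperspheres $H$ with $H$ and its interior contained in $B$, of which there are plenty near every point of $B$.

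Next comes the geometric heart of the argument: showing that $f$ respects the incidence structure encoded above. Since $f$ is injective and carries hyperspheres to hyperspheres, it preserves the relation ``$x$ lies on $H$''; the task is to upgrade this to the preservation of \emph{tangency} (and hence of orthogonality, which like incidence is recoverable from pencils of mutually tangent spheres). When two admissible hyperspheres meet in a genuine $(m-2)$-sphere, its image is infinite, forcing the image hyperspheres to meet in an $(m-2)$-sphere as well; one must then rule out the possibility that a tangent or disjoint pair is sent to a properly intersecting pair. Once tangency is preserved, a pencil of hyperspheres (a projective line in the model, cut out as the set of hyperspheres through a fixed $(m-2)$-sphere) is carried to a pencil, so the induced injection $F$ on the sphere-quadric sends lines into lines. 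As the projective space $\mathbb{P}(\R^{n+1})$ here has dimension $n \ge 3$ and $\R$ has no nontrivial field automorphisms, the fundamental theorem of projective geometry (in its form for injective maps of open sets sending lines into lines) shows that $F$ is the restriction of a projective-linear map; since $F$ preserves the light cone and the bilinear form, this map is a M\"obius transformation. Transporting back to $\R^m$ gives that $f$ agrees with a M\"obius map on $B$, which by the localisation completes the proof.

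The main obstacle is the tangency-preservation step under the deliberately weak hypotheses: $f$ is neither assumed continuous nor surjective and is defined only on a ball, so one cannot relate the image of the interior of $H$ to the interior of $f(H)$, and the images $f(H_1), f(H_2)$ could a priori acquire spurious intersection points lying outside $f(B)$. Excluding these degenerate configurations --- and arranging enough hyperspheres and pencils inside $B$ for the projective fundamental theorem to bite --- is the delicate part; the planar case $m = 2$ is exactly the classical Carath\'eodory situation and is the most intricate, the counting argument above being cleanest when $m \ge 3$.
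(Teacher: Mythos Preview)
The paper does not prove this theorem at all: it is quoted verbatim as \cite[Theorem~2.1]{H} (H\"ofer), introduced by the sentence ``The next result \dots\ generalises a theorem of Carath\'eodory'', and then used as a black box to derive Corollary~\ref{btb}. So there is no proof in the paper to compare your attempt against.

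As to the attempt itself: the projective model of M\"obius geometry you set up is the right ambient framework, and the reduction to the fundamental theorem of projective geometry (for injective line-preserving maps on an open subset of $\mathbb{P}(\R^{n+1})$, the spacelike directions being open) is a legitimate strategy. You have also correctly isolated the crux, namely that the induced map $F$ on the space of admissible hyperspheres must send pencils into pencils. For transversally intersecting pairs $H_1,H_2$ your argument is essentially complete when $m\ge 3$: injectivity forces $f(H_1)\neq f(H_2)$ (since each $f(H_i)$ \emph{equals} a hypersphere, a coincidence would contradict injectivity on points of $H_1\setminus H_2$), and the infinite set $f(H_1\cap H_2)$ then forces $f(H_1)\cap f(H_2)$ to be an $(m-2)$-sphere. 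But the step you flag as ``the main obstacle'' --- ruling out that a tangent or disjoint pair is sent to a properly intersecting pair --- is a genuine gap, not a routine verification: without continuity there is no obvious mechanism preventing a tangent configuration from acquiring an extra intersection point in the image, and your proposal offers no idea for how to exclude this. Until that step is supplied, what you have is a plan rather than a proof; H\"ofer's published argument handles exactly this difficulty, and you should consult it if you wish to complete your route.
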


Here, the \lq interior\rq\  means the bounded component of the complement of the hypersphere.
Our next result  is a corollary.

\begin{corollary}\label{btb}
Let $D$ be a domain in $\R^{n-1}$, where $n\ge 3$.
Let $f\colon D\to\R^{n-1}$ be an injective map such that $f(B)$ is a
ball whenever $B$ is a ball contained in $D$. Then $f$ is
the restriction to $D$ of a M{\"o}bius map.
\end{corollary}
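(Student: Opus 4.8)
The plan is to deduce the corollary from Theorem~\ref{CC} by showing that the hypothesis ``$f$ carries balls to balls'' forces $f$ to carry each hypersphere whose closed ball lies in $D$ onto a hypersphere. The bridge between the two statements is the pair of facts that $f$ is continuous and that it maps the boundary of a closed ball onto the boundary of the image ball. Once hyperspheres go to hyperspheres, Theorem~\ref{CC} applies verbatim (the standing assumption $n\ge 3$ is precisely its hypothesis) and identifies $f$ with the restriction of a M\"obius map.

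First I would record two elementary consequences of the hypothesis. If $\overline{B}_1\subseteq\overline{B}_2$ are balls in $D$ then $f(\overline{B}_1)\subseteq f(\overline{B}_2)$ automatically, and since $f$ is injective the inclusion of images in turn forces inclusion of the balls; moreover injectivity gives $f\bigl(\bigcap_i A_i\bigr)=\bigcap_i f(A_i)$ for any family $\{A_i\}$. Next I establish continuity at an arbitrary $p\in D$. Fix small closed balls $\overline{B}_\varepsilon(p)\subseteq D$ centred at $p$; their images $f(\overline{B}_\varepsilon(p))$ are closed balls, nested and decreasing as $\varepsilon\downarrow 0$, and by the intersection identity $\bigcap_{\varepsilon}f(\overline{B}_\varepsilon(p))=f(\{p\})=\{f(p)\}$. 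A nested family of closed balls whose intersection is a single point must have radii tending to $0$, so $\mathrm{diam}\,f(\overline{B}_\varepsilon(p))\to 0$. Hence for any $\eta>0$ there is $\varepsilon$ with $\mathrm{diam}\,f(\overline{B}_\varepsilon(p))<\eta$, and then every $x$ with $|x-p|\le\varepsilon$ satisfies $|f(x)-f(p)|<\eta$; this is continuity of $f$ at $p$.

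With $f\colon D\to\R^{n-1}$ continuous and injective and $D$ open, invariance of domain shows that $f$ is an open map and a homeomorphism onto the open set $f(D)$. Let $H$ be a hypersphere whose closed ball $\overline{B}$ lies in $D$. Then $f|_{\overline{B}}$ is a continuous injection of the compact set $\overline{B}$, hence a homeomorphism onto $f(\overline{B})$, which by hypothesis is a closed ball $\overline{B}^{\ast}$. Applying invariance of domain to $f$ on the open set $\mathrm{int}\,\overline{B}$ shows that $f(\mathrm{int}\,\overline{B})$ is open and contained in $\overline{B}^{\ast}$, hence contained in $\mathrm{int}\,\overline{B}^{\ast}$; since a homeomorphism of closed balls carries topological boundary onto topological boundary, $f(H)=\partial\overline{B}^{\ast}$ is a hypersphere. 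As $H$ ranges over all hyperspheres with closed ball in $D$, Theorem~\ref{CC} yields that $f$ is the restriction of a M\"obius map.

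The main obstacle is exactly the step upgrading ``balls to balls'' to ``boundaries to boundaries''. A naive attempt to prove $f(\mathrm{int}\,\overline{B})\subseteq\mathrm{int}\,\overline{B}^{\ast}$ directly from the inclusion structure is circular, because an image ball lying properly inside $\overline{B}^{\ast}$ may nonetheless be internally tangent to it and so meet $\partial\overline{B}^{\ast}$. Routing the argument through continuity and invariance of domain is what breaks this circularity, and it is here that injectivity (used to pin the shrinking image balls down to the single point $f(p)$) does the essential work, while the dimension hypothesis $n\ge 3$ is needed only at the final appeal to Theorem~\ref{CC}. The one elementary point I would verify carefully is the lemma that a nested family of closed balls with one-point intersection has radii tending to zero, since the whole continuity argument rests on it.
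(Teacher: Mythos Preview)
Your argument is correct. You reach the ``boundaries go to boundaries'' conclusion by first proving continuity (via the nested-image-balls lemma) and then invoking invariance of domain, so that $f$ restricts to a homeomorphism $\overline{B}\to\overline{B}^{\ast}$ and must send the manifold boundary onto the manifold boundary. The shrinking-radius lemma is fine: for nested closed balls $\overline{B}(c_k,\rho_k)$ one has $|c_{k+1}-c_k|\le\rho_k-\rho_{k+1}$, so if $\rho_k\downarrow\rho>0$ the centres converge to some $c$ and $\overline{B}(c,\rho)$ lies in the intersection, contradicting its being a singleton.

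The paper takes a shorter, more elementary route that avoids both continuity and invariance of domain. It characterises interior and boundary points of a ball $B\subseteq D$ purely in terms of the ball hypothesis: $x\in B$ is interior iff there exist balls $B_1,B_2\subseteq B$ with $B_1\cap B_2=\{x\}$, and $x$ is a boundary point iff there is a ball $B_3\subseteq D$ with $B\cap B_3=\{x\}$. Since $f$ is injective it preserves such single-point intersections, and since $f$ sends balls to balls the same characterisations apply in the image; hence $f(\partial B)=\partial f(B)$ directly, and Theorem~\ref{CC} applies. Your approach buys continuity of $f$ as an explicit intermediate result, at the cost of importing a nontrivial topological theorem; the paper's approach stays entirely within the incidence geometry of balls.
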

\begin{proof}
Let $B$ be a ball in the domain $D$.
It is easy to verify that a point $x$ in $B$ is an interior point if and only if there are two balls $B_1$ and $B_2$
contained in $B$ such that $B_1 \cap B_2 = \{x\}$, and is a boundary point if and only if there is a ball $B_3$ contained in $D$ such that
$B \cap B_3 = \{x\}$.

Now let $B$ be a ball contained in $D$.
Then $f(B)$ is a ball in $f(D)$; it must be infinite and cannot be all of $\R^{n-1}$ since $f$ is injective.
From the characterisation above, $f$ maps interior points of $B$ to
interior points of $f(B)$ and boundary points to boundary points.
Since $f(B)$ is a ball, the restriction of $f$ to the interior of $B$ maps onto the interior of $f(B)$, and the restriction of $f$ to the boundary of $B$ maps onto the boundary of $f(B)$.
So $f$ satisfies the hypothesis of Theorem \ref{CC}, and hence is a M\"obius map.
%The result follows from the unique continuation property of M\"{o}bius maps.
\end{proof}

\section{The boundary map}

\begin{definition}
An  map $f\colon \Mak \to \Mak$ is said to be \emph{conal} if it is injective and
maps $\Mak _z^+$ onto $\Mak _{f(z)}^+$ for all $z\in \Mak$.
\end{definition}

In other words, a conal map is an injective order-preserving map.
We note that the composition of two conal maps again is a conal
map.

\begin{definition}
A sequence $\{z_i\}_{i\in \N}\subseteq \Mak$ is said to be increasing if
$z_{i+1}\ge z_i$ for all $i\in \N$.
\end{definition}

\begin{lemma}\label{seq}
Suppose that $f$ is conal,  $z\in (\Mak \cap V)_e$, and $f(z)\in (\Mak
\cap V)_e$. Let $\{z_i\}_{i\in \N}$ be an
increasing sequence in $\Mak _z ^+$  that converges (in the Euclidean sense) to $z_-$ in $V_-$ as $i\to
\infty$. Then the
(increasing) sequence $\{f(z_i)\}_{i\in \N}$ in $\Mak _{f(z)}
^+$ also converges to an element of $V_-$.
\end{lemma}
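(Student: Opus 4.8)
The plan is to first show that $\{f(z_i)\}$ converges in $V$ at all, and then that its limit must lie on the future boundary, that is, in $V_-$. Write $w_i := f(z_i)$. Since $f$ is conal it is order-preserving, so $\{w_i\}$ is an increasing sequence lying in $\Mak_{f(z)}^+$; by the first Lemma this future is a bounded subset of $V_- + \Omega_+$, so each $w_i$ has zeroth coordinate $(w_i)_0 > 0$. The relation $w_k \ge w_i$ for $k > i$ means, by the coordinate description of the future, that $w_i - w_k \in \overline{\Omega}$, and since $\overline{\Omega}$ is a closed convex cone this persists under limits. From $w_i - w_k \in \overline{\Omega}$ I read off that the $(w_i)_0$ are decreasing, hence convergent (being bounded below by $0$), say to $s_\infty \ge 0$, and that the $V_-$-components satisfy $|(w_i)_- - (w_k)_-| \le (w_i)_0 - (w_k)_0$.

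Convergence of $\{w_i\}$ then follows: the $(w_i)_0$ are Cauchy, so the displacement bound forces the $V_-$-components to be Cauchy as well, and $w_i \to w_\infty$ in $V$ with $(w_\infty)_0 = s_\infty$. The whole content of the lemma is therefore the claim that $s_\infty = 0$, that is, $w_\infty \in V_-$.

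To prove $s_\infty = 0$ I would argue by contradiction, assuming $s_\infty > 0$, so that $w_\infty \in (\Mak \cap V)_e$. Letting $k \to \infty$ in $w_i - w_k \in \overline{\Omega}$ and using that $\overline{\Omega}$ is closed gives $w_i - w_\infty \in \overline{\Omega}$, that is $w_\infty \ge w_i = f(z_i)$ for every $i$; since $(w_\infty)_0 = s_\infty > 0$ this places $w_\infty$ in $\Mak_{f(z_i)}^+$. Here is the key step: because $f$ is conal it restricts, for each $i$, to a bijection of $\Mak_{z_i}^+$ onto $\Mak_{f(z_i)}^+$, so the single point $u := f^{-1}(w_\infty)$ lies in $\Mak_{z_i}^+$ simultaneously for all $i$; equivalently $u \ge z_i$, i.e.\ $z_i - u \in \overline{\Omega}$, for every $i$. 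Comparing zeroth coordinates yields $u_0 \le (z_i)_0$ for all $i$. But $z_i \to z_- \in V_-$ forces $(z_i)_0 \to 0$, whence $u_0 \le 0$, contradicting $u \in (\Mak \cap V)_e$, which requires $u_0 > 0$. Thus $s_\infty = 0$ and $w_\infty \in V_-$.

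I expect the main obstacle to be the third paragraph: recognising that the \emph{onto} half of the conal hypothesis must be used to pull the prospective limit $w_\infty$ back through $f^{-1}$, turning $w_\infty \ge f(z_i)$ into $u \ge z_i$. Without this order-reflection one cannot exploit the fact that the futures $\Mak_{z_i}^+$ shrink towards the single boundary point $z_-$, and an interior limit $s_\infty > 0$ could not be excluded. Note that no continuity or differentiability of $f$ is needed anywhere in the argument.
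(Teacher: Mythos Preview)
Your argument is correct and follows essentially the same route as the paper's proof: both reduce to showing $(f(z_i))_0 \to 0$ by contradiction, pulling back a would-be interior limit through the surjectivity in the conal hypothesis to obtain a point $u$ with $0 < u_0 \le (z_i)_0 \to 0$. The only cosmetic difference is that you establish convergence of $\{f(z_i)\}$ via a direct Cauchy estimate from $w_i - w_k \in \overline{\Omega}$, whereas the paper picks a point $\xi_-$ in the nested intersection $\bigcap_i \partial_\infty \Mak_{f(z_i)}^+$ and bounds $|f(z_i)-\xi_-| \le \sqrt{2}\,(f(z_i))_0$; your version has the minor advantage that the limit $w_\infty$ itself visibly lies in every $\Mak_{f(z_i)}^+$, so no separate existence argument for a point in $\bigcap_i \Mak_{f(z_i)}^+$ is needed.
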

\begin{proof}
Take $\xi_-$ in $\bigcap _{i=1} ^\infty \partial _\infty \Mak _{f(z_i)} ^+$, which is
a nonempty intersection of balls in $V_-$.
We claim that $f(z_i)\to \xi_ -$ as $i\to \infty$.

The radius of the ball $\partial _\infty \Mak _{f(z_i)} ^+$ and the
height of the cone $\Mak _{f(z_i)} ^+$ is equal to $(f(z_i))_0$ (the
$0$th coordinate of $f(z_i)$). This gives an upper bound on the
Euclidean distance between $f(z_i)$ and $\xi _-$, namely, $|f(z_i) - \xi
_-| \le \sqrt{2} (f(z_i))_0$. Assume that $(f(z_i))_0$ does not
converge to zero. Then the intersection of cones $\bigcap _{i=1}
^\infty \Mak _{f(z_i)} ^+\subseteq (\Mak \cap V)_e$ is nonempty. Let
$\xi \in\bigcap _{i=1} ^\infty \Mak _{f(z_i)} ^+$. Then $z_i \le
f^{-1} (\xi)$ for all $i \in \N$, in particular $(z_i)_0 \ge
(f^{-1} (\xi))_0 >0$ for all $i \in \N$, and $z_i$ does not
converge to an element in $V_-$, which is a contradiction.
\end{proof}

\begin{definition}
Let $f$ be conal and assume that $z,f(z)\in (\Mak \cap V)_e$. We
define a local boundary map $f_z^-$ on $\partial _\infty \Mak _z ^+$
by
\begin{equation*}
f_z^-(z_-) :=  \lim _{z_i \to z_-}  f (z_i)
\end{equation*}
for all $z_- \in \partial_\infty \Mak _z ^+$, where $\{z_i\}_{i\in \N}\subseteq \Mak _z ^+$ is an arbitrary increasing
sequence such that $z_i\to z_-$ as $i\to \infty$.
\end{definition}

It is clear that $f_{z} ^-$ is well-defined
and injective on $\partial _\infty \Mak _z ^+$. We also note that
$f_{z} ^-$ maps the ball $\partial _\infty \Mak _{z'} ^+$
onto the ball $\partial _\infty \Mak _{f (z')} ^+$ for any
$z' \in \Mak _z^+$.

\begin{lemma}
Let $f$ be conal and assume that $z,f(z)\in (\Mak \cap V)_e$. Then
$f_{z}^-$ is the restriction of a M\"obius map on the interior
$\left (\partial _\infty \Mak _z ^+\right )^o$ of $\partial _\infty
\Mak _z ^+$.
\end{lemma}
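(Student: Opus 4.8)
The plan is to recognise the restriction of $f_z^-$ to the open ball $D := (\partial _\infty \Mak _z ^+)^o$ as a map that sends balls to balls, and then to invoke Corollary \ref{btb}. Since $D$ is an open ball in $V_- \cong \R^{n-1}$ it is a domain, and we already know that $f_z^-$ is injective, so the hypotheses of Corollary \ref{btb} that remain to be verified are that $n\ge 3$ (which holds in our setting) and that $f_z^-(B)$ is a ball for every ball $B$ contained in $D$.

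The heart of the matter is therefore to represent an arbitrary ball $B \subseteq D$, say with centre $c \in \R^{n-1}$ and radius $r > 0$, as the future boundary $\partial _\infty \Mak _{z'} ^+$ of a point $z'$ lying in $\Mak _z ^+$. Using the coordinate description of Section 3, I would set $z' := z - x$, where $x_0 := z_0 - r$ and $(x_1, \dots, x_{n-1}) := (z_1,\dots,z_{n-1}) - c$. A direct computation then shows that $\partial _\infty \Mak _{z'} ^+$ is precisely the ball of radius $z'_0 = r$ about $(z'_1,\dots,z'_{n-1}) = c$, namely $B$; moreover the requirement $x \in \overline{\Omega}$, that is $x_1^2 + \cdots + x_{n-1}^2 \le x_0^2$, translates exactly into the condition that $B$ be contained in the closed ball $\partial _\infty \Mak _z ^+$. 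Since $B \subseteq D$ this containment holds, and the constraints $z'_0 = r > 0$ and $x_0 = z_0 - r \ge 0$ guarantee that $z' \in \Mak _z ^+ \subseteq (\Mak \cap V)_e$.

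With this representation in hand, the observation recorded before the lemma applies: $f_z^-$ maps $\partial _\infty \Mak _{z'} ^+$ onto $\partial _\infty \Mak _{f(z')} ^+$, and the latter is a ball because $f(z') \in \Mak _{f(z)} ^+ \subseteq (\Mak \cap V)_e$ (the relation $z' \ge z$ forces $f(z') \ge f(z)$, so $f(z')$ again lies in a future of a point of $(\Mak \cap V)_e$). Hence $f_z^-(B)$ is a ball for every ball $B \subseteq D$, and Corollary \ref{btb} yields that $f_z^-$ agrees on $D$ with a M\"obius map.

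I expect the only genuine obstacle to be the bookkeeping in the second paragraph: one must check that the correspondence between the causal order on $\Mak _z ^+$ and the nesting of the associated boundary balls is an \emph{exact} match, so that every ball inside $D$ --- and not merely some of them --- arises from a point of $\Mak _z ^+$. Once this dictionary between \lq points above $z$\rq\ and \lq sub-balls of $\partial _\infty \Mak _z ^+$\rq\ is established, the rest is a formal appeal to the Carath\'eodory-type Corollary \ref{btb}.
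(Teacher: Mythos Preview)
Your proposal is correct and follows exactly the paper's approach: represent each ball $B\subseteq D$ as $\partial_\infty\Mak_{z'}^+$ for some $z'\in\Mak_z^+$, use the already-noted fact that $f_z^-$ carries this onto the ball $\partial_\infty\Mak_{f(z')}^+$, and invoke Corollary~\ref{btb}. The paper's proof is a two-line version of what you wrote; your explicit verification of the dictionary between sub-balls of $\partial_\infty\Mak_z^+$ and points of $\Mak_z^+$ is precisely the bookkeeping the paper leaves implicit (having remarked in Section~3 that every ball arises this way).
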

\begin{proof}
The result follows from Corollary \ref{btb}, since $f_{z}^-$ maps
the ball $B =\partial _\infty \Mak _{z'} ^+$, for $z' \in
\left (\partial _\infty \Mak _z ^+\right )^o$, onto the ball
$f(B) = \partial _\infty \Mak _{f (z')} ^+$.
\end{proof}

Let $\varphi \in \group{Co}(G(\Omega)) ^{(-\alpha )}$. Then $\varphi$ is
a birational function defined on $V^c$ and $\varphi ^- =\varphi
_{V_-}$ by continuity. Let $f$ be conal and $z\in (\Mak \cap V)_e$.
By transitivity there exists a causal map
$\varphi\in\group{Co}(G(\Omega)) ^{(-\alpha )}$ such that $\varphi  \circ
f (z) \in(\Mak \cap V)_e$. The map $\varphi  \circ f$ is conal and
maps $\partial _\infty \Mak _z ^+$ onto $\partial _\infty \Mak
_{\varphi \circ f(z)} ^+$.

Now let $z_1,z_2\in (\Mak \cap V)_e$, and $\varphi_1, \varphi
_2\in\group{Co}(G(\Omega)) ^{(-\alpha )}$. Assume that $(\varphi _1
\circ f)^-_{z_1}$ and $(\varphi _2 \circ f) ^-_{z_1}$ are both
defined in a small neighbourhood around $z_-\in \partial _\infty
\Mak _{z_1} ^+\cap \partial _\infty \Mak _{z_2} ^+$.
The continuity of the causal maps implies that
\begin{align*}
(\varphi _1 \circ f)^- (z_-)
&= \lim _{z_i \to z_-}\varphi _1 \circ f(z_i) \\
&=\lim _{z_i \to z_-} \varphi _1 \circ \varphi _2 ^{-1}\circ \varphi _2 \circ f(z_i)\\
&=\varphi _1 \circ \varphi _2 ^{-1}\circ\lim _{z_i \to z_-}\varphi _2 \circ f(z_i) \\
&=\varphi _1 \circ \varphi _2 ^{-1}\circ(\varphi _2 \circ f)^- (z_-).
\end{align*}
Now we define the boundary map globally.

\begin{definition}
Let $f$ be conal.
We define a (global) boundary map $f^-$ on $V_-$ by
\begin{equation*}
f^- (z_-) := \varphi ^{-1} \circ ( \varphi  \circ f  )^-
(z_-),\qquad (z_-\in \partial _\infty \Mak _z ^+ \subset V_-),
\end{equation*}
where $\varphi$ is any map in $\group{Co}(G(\Omega)) ^{(-\alpha )}$ such
that $\varphi \circ f (z) \in(\Mak \cap V)_e$.
\end{definition}

\begin{lemma}
Let $f$ be conal. Then the boundary map $f^-$ is a
M\"obius map.
\end{lemma}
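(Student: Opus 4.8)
The plan is to show that the globally defined map $f^-$ coincides, in a neighbourhood of each point of $V_-$, with the restriction of a M\"obius map, and then to invoke the unique continuation property of M\"obius maps to deduce that $f^-$ is itself the restriction to $V_-$ of a single element of $\Moeb(\R^{n-1}_\infty)$. Two ingredients feed into this: the preceding lemma, which says that each local boundary map $(\varphi\circ f)^-_z$ is the restriction of a M\"obius map on the interior of a ball, and the observation that the auxiliary causal maps $\varphi$ themselves restrict to M\"obius maps on $V_-$.

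The first thing I would establish is that every $\varphi \in \group{Co}(G(\Omega))^{(-\alpha)}$ restricts on $V_-$ to a M\"obius map $\varphi^-$. This rests on the description of the generators of $G$: translations by elements of $V_-$, dilations $x\mapsto rx$, the orthogonal maps in $\group{O}(n-1)$, and the Jordan inversion $j$. On $V_-$ the first three act as translations, scalings and orthogonal transformations, all of which are similarities and hence M\"obius. For the inversion, a short computation gives $x^{-1}=\alpha(x)/\Delta(x)$, so that for $x=(0,x_1,\dots,x_{n-1})\in V_-$ one has $\Delta(x)=-|x|^2$ and $\alpha(x)=-x$, whence $j(x)=x/|x|^2=j_-(x)$, the reflection in the unit hypersphere. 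Thus $j|_{V_-}=j_-$, each generator preserves $V_-$, and $\varphi^-$ is a composition of M\"obius generators; in particular $\varphi$ restricts to a bijection of $V_-$ with $(\varphi^-)^{-1}=(\varphi^{-1})^-$ also M\"obius.

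Next I would check that $f^-$ is locally M\"obius. Fix $z_-\in V_-$. Since any ball in $\R^{n-1}$ arises as $\partial_\infty\Mak_z^+$ for some $z\in(\Mak\cap V)_e$, I may choose such a $z$ with $z_-\in(\partial_\infty\Mak_z^+)^o$, and by transitivity a map $\varphi\in\group{Co}(G(\Omega))^{(-\alpha)}$ with $\varphi\circ f(z)\in(\Mak\cap V)_e$. The preceding lemma then gives that $(\varphi\circ f)^-$ is the restriction of a M\"obius map on the open ball $(\partial_\infty\Mak_z^+)^o$, which contains $z_-$. On this ball
\begin{equation*}
f^- = \varphi^{-1}\circ(\varphi\circ f)^- = (\varphi^-)^{-1}\circ(\varphi\circ f)^-,
\end{equation*}
a composition of M\"obius maps by the previous paragraph, and hence itself the restriction of a M\"obius map.

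Finally, the compatibility computation recorded before the definition of $f^-$ shows the construction is independent of the auxiliary choices of $z$ and $\varphi$, so the local M\"obius representations agree on overlaps and $f^-$ is a single well-defined map on $V_-$. Applying the unique continuation property with $D=V_-$, I conclude that $f^-$ is the restriction to $V_-$ of an element of $\Moeb(\R^{n-1}_\infty)$, that is, a M\"obius map. The only points requiring care are in the first step — verifying that each causal generator restricts to a M\"obius generator on $V_-$, in particular the identification $j|_{V_-}=j_-$ and the fact that the generators preserve $V_-$ — and ensuring that the balls $(\partial_\infty\Mak_z^+)^o$ genuinely cover $V_-$ with overlaps so that unique continuation applies; both follow directly from the structure theory and the realisation of balls already recorded above.
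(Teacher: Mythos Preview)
Your argument is correct and follows the same strategy as the paper: show $f^-$ is locally M\"obius on the interiors of balls $\partial_\infty\Mak_z^+$, then apply unique continuation. The paper's proof is terser and simply asserts that the restriction of $f^-$ to each $(\partial_\infty\Mak_z^+)^o$ is M\"obius, whereas you make explicit the intermediate fact the paper leaves implicit, namely that every $\varphi\in\group{Co}(G(\Omega))^{(-\alpha)}$ restricts on $V_-$ to a M\"obius map (via the generator description and the identification $j|_{V_-}=j_-$), so that $f^-=(\varphi^-)^{-1}\circ(\varphi\circ f)^-$ is a composition of M\"obius maps.
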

\begin{proof}
Let $B$ be a ball in $V_- = \R^{n-1}$. Then $B =\partial
_\infty \Mak _z ^+$, for some $z\in (\Mak\cap V)_e$. The restriction of
$f^-$ to $\left (\partial _\infty \Mak _z ^+\right )^o$ coincides with the
restriction of a M\"obius map. The unique continuation
property of M\"obius maps gives the result.
\end{proof}

It is worth noting that we have only used the behaviour of the conal
functions on the subset $(\Mak \cap V)_e =V_- +\Omega _+$, so our
result holds for injective order-preserving maps from $(\Mak \cap
V)_e$ into $\Mak$. Actually, the boundary map $f^-$ only
depends on the behaviour of $f$ (arbitrarily) close to the
boundary $V_-$. Let $\{U_i\}_{i\in I} =\{\partial _\infty \Mak
_{z_i} ^+\}_{i\in I}$ be a covering of $V_-$ consisting of closed
balls of radius less than some $\varepsilon >0$. Then $f$ is
determined by its values on $\bigcup_{i\in I}\Mak _{z_i} ^+$, which
lie inside an $\varepsilon$-band around $V_-$.

\section{Order invariant injective maps are causal}

%Consider the causal subgroups $N_+^\alpha$ of translations by
%elements in $V_-$, dilations $\{x \mapsto rx \suchthat r >0\}$, the orthogonal maps
%$\group{O}(n-1)$ and $N_-^\alpha = j\circ N_+^\alpha \circ j$. The
%boundary maps (restriction to $V_-\simeq \R ^{n-1}$) formed by
%elements from these subgroups yield a generating set of maps in
%$\Moeb _B({\R}^{n-1}_{\infty })$, namely, $r A +\tau
%_v,\,r>0, A\in \group{O}(n-1),\,v\in \R ^{n-1}$, and $j_- \circ \tau_v
%\circ j_-$.

Since different conal maps define different boundary maps, that
is, the transformation $f\mapsto f^-$ is injective on the set of conal
maps, we have the following rigidity theorem.

\begin{theorem}
Let $f$ be conal. Then $f$ extends to an element in
$\group{Co}(G(\Omega)) ^{(-\alpha )}$.
In particular,  $f$ extends to a birational function on $V^c$.
\end{theorem}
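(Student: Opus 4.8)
The plan is to realise the boundary map $f^-$, which the preceding lemma shows is an element of $\Moeb(\R^{n-1}_{\infty})$, as the boundary map of a suitable $\varphi$ in $\group{Co}(G(\Omega))^{(-\alpha )}$, and then to show that a conal map is determined by its boundary map, forcing $f$ to agree with $\varphi$ on $\Mak$. I would proceed in three steps: first, prove that the restriction $\varphi\mapsto \varphi^- = \varphi|_{V_-}$ sends $\group{Co}(G(\Omega))^{(-\alpha )}$ \emph{onto} $\Moeb(\R^{n-1}_{\infty})$; second, select $\varphi$ with $\varphi^- = f^-$; third, invoke injectivity of $f\mapsto f^-$ to conclude $f = \varphi|_{\Mak}$.

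For the surjectivity I would argue on generators. The group $\group{Co}(G(\Omega))^{(-\alpha )}$ is generated by the translations $\tau_v$ with $v\in V_-$, the dilations $x\mapsto rx$ with $r>0$, the orthogonal maps in $\group{O}(n-1)$ acting on $V_-$, and the inversion $j$. Restricting each generator to $V_-\cong\R^{n-1}$ and using $\varphi^- = \varphi|_{V_-}$, the translations yield translations, the dilations yield dilations, the orthogonal maps yield orthogonal maps, and $j$ yields the reflection $j_-$ in the unit hypersphere. Since these are exactly a generating set for $\Moeb(\R^{n-1}_{\infty})$, namely the similarities together with $j_-$, the restriction is surjective. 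In particular there is a $\varphi\in\group{Co}(G(\Omega))^{(-\alpha )}$ with $\varphi^- = f^-$; being causal and bijective on $\Mak = Ge$, this $\varphi$ is itself conal, with boundary map $\varphi^- = f^-$.

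It then remains to justify that $f\mapsto f^-$ is injective on conal maps, the assertion preceding the theorem. If $z\in(\Mak\cap V)_e$ and $f(z)\in(\Mak\cap V)_e$, the global boundary map carries the ball $\partial_\infty\Mak_z^+$ onto $\partial_\infty\Mak_{f(z)}^+$, which is the ball centred at $(f(z)_1,\dots,f(z)_{n-1})$ of radius $f(z)_0$; hence the spatial coordinates and the positive coordinate $f(z)_0$, and thus $f(z)$ itself, are determined by $f^-$. For an arbitrary $z\in\Mak$ I would choose $\psi\in\group{Co}(G(\Omega))^{(-\alpha )}$ with $\psi\circ f(z)\in(\Mak\cap V)_e$ and apply the conjugation identity $(\psi\circ f)^- = \psi^-\circ f^-$, which follows from the definition of the global boundary map, to reduce to the previous case, so that $f$ is determined everywhere by $f^-$. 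Two conal maps with equal boundary maps must therefore coincide; applying this to $f$ and to $\varphi|_{\Mak}$ yields $f = \varphi|_{\Mak}$. As $\varphi$ is a birational map on $V^c$, this provides the asserted extension.

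The main obstacle is the first step. One must ensure that $\group{Co}(G(\Omega))^{(-\alpha )}$ is large enough to realise \emph{every} M\"obius map as a boundary map, the orientation-reversing ones included, and that no connectedness restriction on the group shrinks the image below all of $\Moeb(\R^{n-1}_{\infty})$. The explicit generator description, in which the full orthogonal group $\group{O}(n-1)$ and the inversion $j$ occur, is exactly what secures this; verifying the boundary restrictions of these generators is the crux, the remainder being the recovery of $f$ from $f^-$ outlined above.
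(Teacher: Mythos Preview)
Your proposal is correct and follows the same line as the paper, which is itself extremely terse: the paper simply asserts that $f\mapsto f^-$ is injective on conal maps and declares the theorem an immediate consequence. You have made explicit the two ingredients the paper leaves implicit---the surjectivity of $\varphi\mapsto\varphi|_{V_-}$ from $\group{Co}(G(\Omega))^{(-\alpha)}$ onto $\Moeb(\R^{n-1}_\infty)$ via generators, and the recovery of $f(z)$ from the ball $f^-(\partial_\infty\Mak_z^+)=\partial_\infty\Mak_{f(z)}^+$---so your write-up is in fact more complete than the paper's own justification.
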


Finally we transfer our results to the hyperboloid with one sheet
in $\R^{n+1}$, using the isomorphism given by (\ref{1}) and
(\ref{2}). With notation as in \cite[\S10]{FHO}, it is easily seen
that $\R^+$ (dilations) corresponds to the hyperbolic
multiplication by the Abelian group $A$, the nilpotent groups
$N_+^\alpha$ and $N_-^\alpha$ correspond to the nilpotent groups
$N$ and $\overline{N}$ respectively and the (connected) component
of $\group{O}(n-1)$ corresponds to the compact group $K\cap H$,\ where $K
= \group{SO}(n)$ is the maximal compact subgroup of $G$.

\begin{corollary}
Let $f$ be an order-preserving injective map on the noncompactly
causal symmetric space $\group{SO} _0 (1,n)/\group{SO}_0 (1,n-1)$. Then $f$
extends to an element in $\group{O} (1,n)^+$.
\end{corollary}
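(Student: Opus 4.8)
The plan is to deduce this from the preceding theorem by transporting everything from the Makarevi\v c model to the hyperboloid model along the causal isomorphism (\ref{1})--(\ref{2}). First I would observe that an order-preserving injective map on $\group{SO}_0(1,n)/\group{SO}_0(1,n-1)$ is precisely a conal map on $\Mak$: the symmetric space $\group{SO}_0(1,n)/\group{SO}_0(1,n-1)$ is $\Mak=G/H$ itself, and (\ref{1})--(\ref{2}) is a causal isomorphism carrying the chart $(\Mak\cap V)_e=V_-+\Omega_+$ onto the part $\{y_0+y_n>0\}$ of $\Hyp_n$ and intertwining the two orderings (indeed $y_0+y_n=1/x_0$). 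Hence the preceding theorem applies and shows that such an $f$ extends to an element $g\in\group{Co}(G(\Omega))^{(-\alpha)}$, in particular to a birational map on $V^c$.

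It then remains to identify the group $\group{Co}(G(\Omega))^{(-\alpha)}$, realised on $\Hyp_n$, with $\group{O}(1,n)^+$. For the identity component I would invoke the generator dictionary recorded immediately after the theorem: the dilations correspond to the Abelian part $A$, the nilpotent groups $N_+^\alpha$ and $N_-^\alpha$ to $N$ and $\overline{N}$, and the connected orthogonal part $\group{SO}(n-1)$ to $K\cap H$. Since $G$ is generated by $V_-$-translations, dilations, $\group{SO}(n-1)$ and $j$, all of which lie in $\group{Co}(G(\Omega))^{(-\alpha)}$, one has $\big(\group{Co}(G(\Omega))^{(-\alpha)}\big)_0=G\simeq\group{SO}_0(1,n)$, acting on $\Hyp_n$ by Lorentz transformations.

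The main step, and the one needing care, is to account for the full group $\group{O}(1,n)^+$ rather than only its identity component $\group{SO}_0(1,n)$. The extra component is present because $G(\Omega)=\R_+\times\group{O}(1,n-1)^+$ is disconnected: an orientation-reversing orthogonal map on $V_-\cong\R^{n-1}$ fixes the time axis $V_+$, hence preserves $\Omega$ and the causal structure while reversing spatial orientation, and so transfers to the nonidentity, time-orientation-preserving component of $\group{O}(1,n)^+$. I would check that $\group{Co}(G(\Omega))^{(-\alpha)}$ is generated by such reflections together with the identity-component generators above, so that its image is exactly $\group{O}(1,n)^+$; conversely, since $f$ preserves the future cone, no time-reversing element can occur, confining the image to the orthochronous group $\group{O}(1,n)^+$. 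Combining these observations, $f$ extends to an element of $\group{O}(1,n)^+$, as claimed.
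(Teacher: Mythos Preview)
Your proposal is correct and follows essentially the same approach as the paper: apply the preceding theorem to obtain $f\in\group{Co}(G(\Omega))^{(-\alpha)}$, then transport to the hyperboloid via the causal isomorphism (\ref{1})--(\ref{2}) and use the generator dictionary (dilations $\leftrightarrow A$, $N_\pm^\alpha\leftrightarrow N,\overline N$, $\group{O}(n-1)\leftrightarrow K\cap H$) recorded just before the corollary. In fact the paper gives no further argument beyond that paragraph, so your explicit treatment of the non-identity component---via orientation-reversing elements of $\group{O}(1,n-1)^+\subset G(\Omega)$ and the observation that order-preservation rules out time-reversal---fills in a detail the paper leaves implicit.
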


\end{document}